  \pgfplotsset{compat=newest}
  \newlength\figureheight
  \newlength\figurewidth
\renewcommand{\tilde}{\widetilde}
\renewcommand{\bar}{\overline}
\renewcommand{\rho}{\varrho}
\newcommand{\mb}{\mathbb{P}}
\newcommand{\mbb}[1]{\mb\left(#1\right)}
\newcommand{\given}{~\big|~}
\newtheorem{thm}{Theorem}[section]
\newtheorem{prop}[thm]{Proposition}
\definecolor{plotcolor1}{rgb}{0,0.447,0.741}
\definecolor{plotcolor2}{rgb}{0.741,0,0.447}
\definecolor{plotcolor3}{rgb}{0,0.741,0.294}
\definecolor{plotcolor4}{rgb}{0.741,0.294,0}
\definecolor{plotcoloraux}{rgb}{0.447,0.447,0.447}
\tikzset{plotstyle1/.style={color=plotcolor1,solid,line width=1.0pt}}
\tikzset{plotstyle2/.style={color=plotcolor2,densely dashed,line width=1.0pt}}
\tikzset{plotstyle3/.style={color=plotcolor3,dotted,line width=1.0pt}}
\tikzset{plotstyle4/.style={color=plotcolor4,loosely dashed,line width=1.0pt}}
\tikzset{auxlines/.style={color=plotcoloraux,solid,line width=0.5pt}}
\newcommand{\markersize}{0.7pt}
\tikzset{discretemarkers/.style={mark=*,mark options={solid},mark size=\markersize}}
  \colorlet{greencolor}{green!50!black}
  \colorlet{textcolor}{red}
  \colorlet{tancolor}{orange!80!black}
  \colorlet{bluecolor}{blue}
\newcommand{\Pl}{\mathbb P}
\newcommand{\E}{ \mathbb E}
\def\E{\mathbb{E}}
\def\Var{\mathrm{Var}}
\title{The Role of Information in System Stability\\
with Partially Observable Servers}
\author{A. Asanjarani\footnote{a.asanjarani@uq.edu.au}, Y. Nazarathy\footnote{y.nazarathy@uq.edu.au}.\\
The University of Queensland, St Lucia, Brisbane, QLD, 4072.
}
\begin{document}

\maketitle

\begin{abstract}
We consider a simple discrete time controlled queueing system, where the controller has a choice of which server to use at each time slot and server performance varies according to a Markov modulated random environment. We explore the role of information in the system stability region. At the extreme cases of information availability, that is when there is either full information or no information, stability regions and maximally stabilizing policies are trivial. But in the more realistic cases where only the environment state of the selected server is observed, only the service successes are observed or only queue length is  observed, finding throughput maximizing control laws is a challenge.  To handle these situations, we devise a Partially Observable Markov Decision Process (POMDP) formulation of the problem and illustrate properties of its solution. We further model the system under given decision rules, using Quasi-Birth-and-Death (QBD) structure to find a matrix analytic expression for the stability bound. We use this formulation to illustrate how the stability region grows as the number of controller belief states increases. 

Our focus in this paper is on the simple case of two servers where the environment of each is modulated according to a two-state Markov chain. As simple as this case seems, there appear to be no closed form descriptions of the stability region under the various regimes considered. Our numerical approximations to the POMDP Bellman equations and the numerical solutions of the QBDs hint at a variety of structural results.
\end{abstract}

\textbf{keyword:}
system stability, POMDP, control, queueing systems, optimal, Bellman equations, QBD, information, Markov models.

\linenumbers

\section{Introduction}
Performance evaluation and control of queueing systems subject to randomly varying environments is an area of research that has received much attention during the past few decades (see for example, \cite{cecchi2016nearly}, \cite{johnston2006opportunistic}, \cite{SKRS2008} and references there-in). This is because numerous situations arise in practice where a controller needs to decide how to best utilise resources, and these are often subject to changing conditions. Examples of such situations arise in wireless communication, supply chain logistics, health care, manufacturing and transportation. In all these situations, it is very common for service rates to vary in a not fully predictable manner. Using Markovian random environments has often been a natural modelling choice due to the tractability and general applicability of Markov models. See for example Section A.1 in \cite{MeynBook} for a general discussion on the ubiquity of Markov models.

The bulk of the literature dealing with performance evaluation and control of these types of problems, has considered the situation where the state of the underlying random environment is observable. In such a situation, it is already a non-trivial task to carry out explicit system performance analysis (see \cite{kellaWhitt} as an example). Further, finding optimal or even merely stabilizing control is typically a formidable achievement (see for example \cite{BacelliMakowski}, \cite{TassiulaEphremides1993} or the more recent \cite{HalabianEtAl2014}). But in practice, the actual environment state is often not a directly observed quantity, or is at best only partially observable. The situation is further complicated when control decisions do not only affect instantaneous rewards, but also the observation made. In classic linear-quadratic optimal control settings (e.g. Part III of \cite{whittleBook}), the certainty equivalence principle allows to decouple state estimation based on observations and control decisions. However, in more complicated settings such as what we consider here, certainty equivalence almost certainly doesn't hold.

In this paper we augment the body of literature dealing with exploration vs. exploitation trade-offs in systems where a controller needs to choose a server (channel/resource/bandit) at any given time, and the choice influences both the immediate reward (service success) and the information obtained. A general class of such problems, denoted Reward Observing Restless Multi Armed Bandits (RORMAB), is outlined in \cite{chapterKuhnNaz2015}, where much previous literature is surveyed. Key contributions in this area are \cite{Koole} and the more recent \cite{LZ2010}. The former finds the structure of optimal policies from first principles. The latter, generalizes the setting and utilizes the celebrated Whittle Index, \cite{Whittle1988} for such a partially observable case.   Related recent results dealing with RORMAB problems are in \cite{larranaga2016asymptotically} and \cite{larranaga2014index}. Of further interest is the latest rigorous account on asymptotic optimality of the Whittle index, \cite{verloop2014asymptotically}.

Our focus in this paper, is on a controlled queueing systems, where server environments vary and the controller (choosing servers) only observes partial information. Our aim is to explore the role of information in system stability. For this, we devise what is perhaps the simplest non-trivial model possible: a single discrete time queue is served by either Server~$1$ or Server~$2$ where each server environment is an independent two-state Markov chain. A controller having (potentially) only partial state information, selects one of the two servers at each time instance. 

The role of information is explored by considering different observation schemes. At one extreme, the controller has full information of the servers' environment states. At the other extreme, the controller is completely unaware of the servers' environment states. Obviously the stability region of the system in the latter situation is a subset of the former. Our contribution is in considering additional more realistic observation schemes. One such scheme is a situation where the controller only observes the state of the server currently chosen. This type of situation has been widely studied in some of the references mentioned above and surveyed in \cite{chapterKuhnNaz2015}, but most of the literature dealing with this situation does not consider a queue. A more constrained scenario is one where the controller only observes the success/failure of service (from the server chosen) at every time slot. Such a partial observability situation was recently introduced in \cite{nazarathy2015challenge} in the context of stability and analysed in \cite{meshram2016whittle} with respect to the Whittle index. In \cite{li2013network}, stability of a related multi-server system was analysed. 

An additional observation scheme that we consider is one where the server is only aware of the queue size process. In (non-degenerate) continuous time systems, such an observation scheme is identical to the former scheme. But an artefact of our discrete time model is that such a scheme reveals less information to the controller (this is due to the fact that both an arrival and a departure may occur simultaneously, going unnoticed by the controller).

With the introduction of the five observation schemes mentioned above, this work takes first steps to analyse the effect of information on the achievable stability region. A controller of such a system makes use of a belief state implementation. We put forward (simple) explicit belief state update recursions for each of the observation schemes. These are then embedded in Bellman equations describing optimal solutions of associated Partially Observable Markov Decision Processes (POMDP). Numerical solution of the POMDPs then yields insight on structural properties and achievable stability regions. By construction, two-state Markov server environments are more predictable when the mixing times of the Markov chains increase. We quantify this use of channel-memory, through numerical and analytic results.

It is often the case that MDPs (or POMDPs) associated with queueing models, can be cast as QBDs once a class of control policies is found. See for example \cite{meszaros2014markov}. We follow this paradigm in the current paper and present a detailed QBD model of the system. The virtue of our QBD based model is that we are able to quantify the effect of a finite state controller on the achievable stability region whose upper bound is given by an elegant matrix analytic expression.

The remainder of this paper is structured as follows.
In Section~\ref{sec:model}, we introduce the system model and different observation schemes. In Section~\ref{sec:belief}, we put forward recursions for belief state updates for the non-trivial observation schemes. In Section~\ref{sec:pomdp}, we present the myopic policy and the Bellman equations for different observation schemes and present findings from a numerical investigation. In Section \ref{sec:finiteQBD},  we construct a QBD representation of the system, find the stability criterion and put forward the numerical results which are matched with the results of Bellman equations of Section~\ref{sec:pomdp}. We conclude in Section~\ref{sec:outlook}.

\section{System Model}
\label{sec:model}

\begin{figure}[h]
\begin{center}
\begin{tikzpicture}[xscale=1,yscale=0.75]

 \def\hh{0.6}
\draw [<-, thick]  (1.5,4-\hh)-- (1,4-\hh)  node[left] {$E(t)$};
\draw (0.4+\hh+\hh,3.55-\hh) -- (1.5+\hh+\hh,3.55-\hh) -- (1.5+\hh+\hh,4.35-\hh) -- (0.4+\hh+\hh,4.35-\hh);

\draw[ ultra thick]  (1.4+\hh+\hh,0.08+3) --(1.4+\hh+\hh,0.7+3);
\draw[ ultra thick]  (1.25+\hh+\hh,0.08+3) --(1.25+\hh+\hh,0.7+3);
\draw[ ultra thick]  (1.1+\hh+\hh,0.08+3) --(1.1+\hh+\hh,0.7+3);
\draw[ ultra thick]  (0.95+\hh+\hh,0.08+3) --(0.95+\hh+\hh,0.7+3);
\node at (2.2, 2.6){$Q(t)$};

\draw (7,0.6) rectangle (9.6,2.5);

\begin{scope}[scale=0.75,transform shape]
 \Vertex[x=12,y=1.9]{1}
  \Vertex[x=10,y=1.9]{0}
  \tikzstyle{EdgeStyle}=[bend left,post]

  \Edge(0)(1)
  \Edge (1)(0)
  \node at (10.2, 2.6){$I^{(2)}_0(t)$};
  \node at (11.8, 2.6) {$I^{(2)}_1(t)$};
     \node at (11, 1.2) {$X_2(t)$};      
                       
\end{scope}
\draw[->, thick]  (9.9,1.45) -- (10.5,1.45)node[right] {$I_{X_2(t)}^{(2)}(t)$};

 \def\h{4}
 \draw (7,0.6+\h) rectangle (9.6,2.5+\h);

\begin{scope}[scale=0.75,transform shape]
 \Vertex[x=12,y=3.2+\h]{1}
  \Vertex[x=10,y=3.2+\h]{0}
  \tikzstyle{EdgeStyle}=[bend left,post]

  \Edge(0)(1)
  \Edge (1)(0)
    \node at (10.2, 4+\h){$I^{(1)}_0(t)$};
  \node at (11.8, 4+\h) {$I^{(1)}_1(t)$};
     \node at (11, 2.5+\h) {$X_1(t)$};     
      
   \end{scope}
\draw[->, thick]  (9.8,1.45+\h) -- (10.4,1.45+\h)node[right] {$I_{X_1(t)}^{(1)}(t)$};

\node at (4.8,3.5) [cloud, draw,cloud puffs=10,cloud puff arc=120, aspect=0.75, inner ysep=0.9em,fill=gray!10] {$\pi\rightarrow U(t)$};

\draw[-,thick, dashed]  (2.8,3.5) --(3.5,3.5);
\draw[->,thick, dashed]  (6.2,4.1) --(7,4.8);
\draw[->,thick, dashed]  (6.2,3) --(7,2.3);


\draw [->, dotted,line width=1.0pt,plotcolor2] (9.2,3.45) to
(6.5,3.45); 
\node at (8.1,3.75){\color{plotcolor2}$Y(t)$};
\end{tikzpicture}
\caption{A controller operating under a decision rule, $\pi$, decides at each time step, $t$, if to use server $U(t)=1$ or $U(t)=2$ based on previously observed information, $Y(t-1), Y(t-2),\ldots$. The server environments, $X_i(t)$ are Markov modulated.\\
\label{fig:2servermodelPrimitives}}
\end{center}
\end{figure}
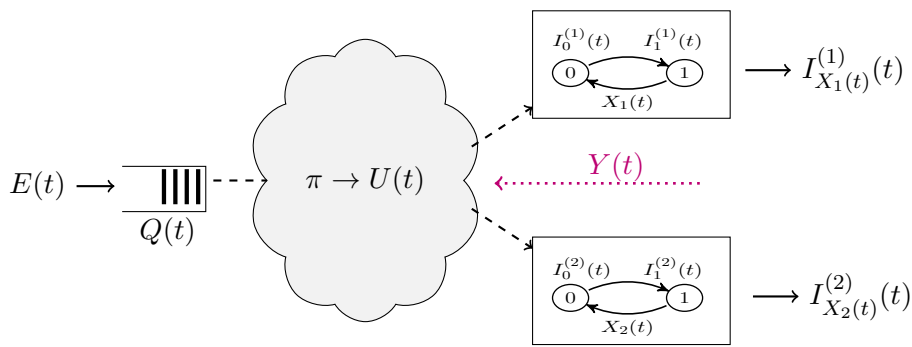

Consider a situation as depicted in Figure~\ref{fig:2servermodelPrimitives}.
Jobs arrive  into the queue, $Q(t)$,  and are potentially served by one of two servers $j=1$ or $j=2$, according to some control policy. The system is operating in discrete time steps $t =0,1,\dots$, where in each time step, the following sequence of events occur:
\begin{enumerate}
\item An arrival occurs as indicated through $E(t)$: $E(t)=1$ indicates an arrival and otherwise $E(t)=0$.
\item The environments of the servers update  from $\big(X_1(t-1), \, X_2(t-1) \big)$ to $\big(X_1(t), \, X_2(t) \big)$, autonomously. That is, the updating of the environment is not influenced by arrivals, queue length and controller choice.
\item A control decision $u=U(t)$ of which server to select is made based on observations in previous time steps, denoted by $Y(t-1), Y(t-2),\ldots$. This is through a decision rule, $\pi$. We consider different observation schemes as described below.
\item The control action is executed and the queue length is updated as follows:
\[
Q(t+1) = Q(t) + E(t) - I(t),
\qquad
\mbox{with}
\qquad I(t) = I^{(u)}_{X_{u}(t)}(t).
\]
Here,  $I(t)$ indicates whether there was a service success or not. It is constructed from the primitive sequences,
\[
\big\{ I_{i}^{(j)}(t),~t=0,1,2,\ldots \big\},
\]
for servers $j=1,2$ and environment states $i=0,1$. Note that when $Q(t)=0$, we notionally assume that $U(t)=u=0$, indicating ``no action'' and in this case denote $I^{(0)}_i \equiv 0$ for $i=0,1$.
\item The observation of $Y(t)$ is made and is used in subsequent time steps.
\end{enumerate}

The sequence of events (1)--(5) as above repeats in every time step and fully defines the evolution law of the system.  We consider the following distinct observation schemes:\\

\noindent
\textbf{(I) Full observation}:  The controller knows the state of both servers all the time. In this case
\[
Y(t) = \big(X_1(t), \, X_2(t) \big),
\]
and further, the sequence of steps above is slightly modified with step (5) taking place between steps (2) and (3) and the policy at step 3 being
\[
u=U(t) = \pi\big(Y(t)\big).
\]

\noindent
\textbf{(II) State observation:} The controller observes the state of the selected server at time $t$, but does not observe the other server at that time. Hence 
\[
Y(t) = 
\begin{cases}
\big(X_1(t), \emptyset  \big) & \mbox{if}~~u=1, \\
\big(\emptyset, X_2(t)  \big) & \mbox{if}~~u=2. \\
\end{cases}
\]

\noindent
 \textbf{(III) Output  observation:} The controller observes the success or failure of outputs of the server selected (but gains no information about the other server at that time). Hence
\[
Y(t) = 
\begin{cases}
\big(I^{(1)}_{X_1(t)}(t), \emptyset  \big) & \mbox{if}~~u=1, \\
\big(\emptyset, I^{(2)}_{X_2(t)}(t)  \big) & \mbox{if}~~u=2. \\
\end{cases}
\]
\noindent
\textbf{(IV) Queue observation:} The controller only observes the queue length, $Q(t)$, and can thus utilize the differences
\[
\Delta Q(t) = Q(t+1) - Q(t) = E(t) - I(t).
\]
Note that since the system is operating in discrete time, there is some loss of information compared to case III : If $\Delta Q(t) = 1$ or $\Delta Q(t) = -1$, then it is clear that $I(t) = 0$ or $I(t) = 1$, respectively. But if $\Delta Q(t) = 0$, then since the controller does not observe $E(t)$, there is not a definitive indication of $I(t)$.\\

\noindent
\textbf{(V) No  observation:} We assume the controller does not observe anything. Nonetheless, as with the other cases, the controller knows the system parameters as described below.

\vspace{10pt}

We consider the simplest non-trivial probably model for the primitives. These are $E(t)$, $I_i^{(j)}(t)$ and the environment processes, $X_j(t)$, all assumed mutually independent. The arrivals, $E(t)$, are an i.i.d. sequence of Bernoulli random variables, each with probability of success $\lambda$. The service success indicators, for each server $j=1,2$ and state $i=0,1$, denoted by $\{I_i^{(j)}(t), ~t=0,1,2,\ldots\}$, are each an i.i.d. sequence with
\[
I_i^{(j)}(t) ~\sim~ \mbox{Bernoulli}\big(\mu_i^{(j)}\big).
\]
Moreover, we assume
\begin{equation}\label{eq:assumption}
\mu_0^{(2)}\leq \mu_0^{(1)}< \mu_1^{(1)}\leq \mu_1^{(2)}.
\end{equation}
Hence states $i=1$ for both servers are better than states $i=0$. Further, the spread of the chance of success for Server~$2$ is greater or equal to that for Server~$1$.

For the environment processes, we restrict attention to a two-state Markov chain, sometimes referred to as a Gilbert--Elliot channel \cite{SKRS2008}. We denote the probability transition matrix for server $j$ as:
\begin{equation}\label{eq:prob.matrix}
P^{(j)}
=
\left[
\begin{array}{cc}
\bar{p}_j  & p_j \\
q_j & \bar{q}_j
\end{array}
\right]
=
\left[
\begin{array}{cc}
1- \gamma_j \, \bar{\rho}_j& \gamma_j \, \bar{\rho}_j \\
 \bar{\gamma}_j\, \bar{\rho}_j&1- \bar{\gamma}_j\, \bar{\rho}_j
 \end{array}
\right],
\end{equation}
with $\bar{x} :=1-x$. In the sequel, we omit the server index $j$ from the individual parameters of $P^{(j)}$.   A standard parametrization of this Markov chain uses transition probabilities $p, q \in [0,1]$.
Alternatively, we may specify the stationary probability of being in state $1$, denoted by $\gamma \in [0,1]$,
together with the second eigenvalue of $P$, denoted by 
$\rho\in \big[ 1- \min(\gamma^{-1},\, \bar{\gamma}^{-1} \big),~ 1\big]$. 

Using this parametrization, $\rho$ quantifies the time-dependence of the chain; 
when $\rho= 0$ the chain is i.i.d., otherwise there is memory. If $\rho>0$ then environment states are positively correlated, otherwise they are negatively correlated. Our numerical examples in this paper deal with positive correlation as it is often the more reasonable model for channel memory.
The relationship between the $(\gamma,\rho)$ parametrization and the $(p,q)$ parametrization is given by $p = \gamma \, \bar{\rho}$, $q = \bar{\gamma}\,\bar{\rho}$, $\gamma =p/({p+q})$, and $\rho = 1-p-q$. See Figure~\ref{fig:2servermodel}.
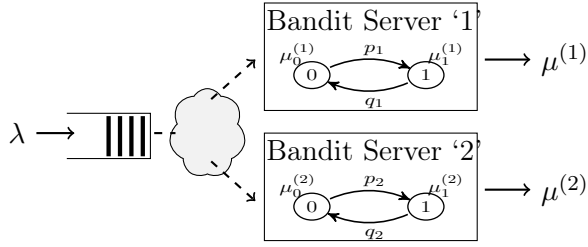
\begin{figure}[h]
\begin{center}
\hspace{5mm}\begin{tikzpicture}[xscale=1,yscale=0.75]

 \def\hh{0.5}
\draw [<-, thick]  (2.5,3-\hh)-- (2,3-\hh)  node[left] {$\lambda$};
\draw (1.4+\hh+\hh,2.55-\hh) -- (2.5+\hh+\hh,2.55-\hh) -- (2.5+\hh+\hh,3.35-\hh) -- (1.4+\hh+\hh,3.35-\hh);

\draw[ ultra thick]  (2.4+\hh+\hh,0.09+2.5-0.5) --(2.4+\hh+\hh,0.8+2.5-0.5);
\draw[ ultra thick]  (2.25+\hh+\hh,0.09+2.5-0.5) --(2.25+\hh+\hh,0.8+2.5-0.5);
\draw[ ultra thick]  (2.1+\hh+\hh,0.09+2.5-0.5) --(2.1+\hh+\hh,0.8+2.5-0.5);
\draw[ ultra thick]  (1.95+\hh+\hh,0.09+2.5-0.5) --(1.95+\hh+\hh,0.8+2.5-0.5);

\draw (5,0.6) rectangle (7.8,2.5);
\node at (6.45, 2.2) {Bandit Server `2'};

\begin{scope}[scale=0.75,transform shape]
 \Vertex[x=9.5,y=1.6]{1}
  \Vertex[x=7.5,y=1.6]{0}
  \tikzstyle{EdgeStyle}=[bend left,post]

  \Edge(0)(1)
  \Edge (1)(0)
   \node at (7.25, 2.1) {$\mu_0^{(2)}$};
      \node at (9.85, 2.1) {$\mu_1^{(2)}$};
            \node at (8.6, 0.95) {$q_{2}$};
                        \node at (8.6,2.2) {$p_{2}$};
\end{scope}
\draw[->, thick]  (7.9,1.5) -- (8.5,1.5)node[right] {$\mu^{(2)}$};

 \def\h{2.3}
 \draw (5,0.6+\h) rectangle (7.8,2.5+\h);
\node at (6.45, 2.2+\h) {Bandit Server `1'};

\begin{scope}[scale=0.75,transform shape]
 \Vertex[x=9.5,y=4.7]{1}
  \Vertex[x=7.5,y=4.7]{0}
  \tikzstyle{EdgeStyle}=[bend left,post]

  \Edge(0)(1)
  \Edge (1)(0)
   \node at (7.25, 5.2) {$\mu_0^{(1)}$};
      \node at (9.85, 5.2) {$\mu_1^{(1)}$};
            \node at (8.6, 4.05) {$q_{1}$};
                        \node at (8.6,5.3) {$p_{1}$};
\end{scope}
\draw[->, thick]  (7.9,1.5+\h) -- (8.5,1.5+\h)node[right] {$\mu^{(1)}$};

\node at (4.25,2.5) [cloud, draw,cloud puffs=7,cloud puff arc=120, aspect=0.75, inner ysep=1em,fill=gray!10] {};

\draw[-,thick, dashed]  (3.55,2.5) --(3.83,2.5);
\draw[->,thick, dashed]  (4.3,3.1) --(4.88,3.8);
\draw[->,thick, dashed]  (4.3,2) --(4.88,1.3);


\end{tikzpicture}
\caption{Parameters of the system.\label{fig:2servermodel}}
\end{center}
\end{figure}

It is instructive to consider the long term behaviour of $I_{X_j(t)}^{(j)}(t)$ for $j=1,2$ by assuming the sequence $\{X_j(t),~t=0,1,2,\ldots\}$ is stationary and thus each $X_j(t)$ is Bernoulli distributed with parameter $\gamma_j$.  In this case
\begin{align*}
\E [I_{X_j(t)}^{(j)}(t)] & = \bar \gamma_j\,\mu_0^{(j)}+\gamma_j \,\mu_1^{(j)}\,,\\
\Var [I_{X_j(t)}^{(j)}(t)] & = \bar \gamma_j\,\mu_0^{(j)}\,\bar \mu_0^{(j)}+
\gamma_j \,\mu_1^{(j)}\,\bar \mu_1^{(j)}+\gamma_j\, \bar\gamma_j\,\big(\mu_1^{(j)}-\mu_0^{(j)}\big)^2\,.
\end{align*}
These quantities  are useful for obtaining a rough handle on the performance of the system.

The {\em stability region} of the system is defined as the set of arrival rates for which there exist a decision rule, $\pi$,  under which the Markov chain describing the system is positive recurrent\footnote{There may be different Markov chain representations of this system. One concrete description is $Z(t)$ in Section \ref{sec:finiteQBD}.}. Note that this Markov chain, encapsulates $Q(t)$, but may also include additional state variables. We postulate that stability regions are of the form
\[
\{ \lambda ~:~ \lambda < \mu^*\},
\]
where the {\em stability bound} $\mu^*$ varies according to the observation schemes I--V above and is identical to the maximal throughput rate that may be obtained in a system without a queue (but rather with an infinite supply of jobs). Further, by the construction of the information observation schemes, we have 
\begin{equation}\label{eq:relation}
\mu^*_\text{no} \le  \mu^*_\text{queue} \le \mu^*_\text{output} \le \mu^*_\text{state} \le \mu^*_\text{full}\,,
\end{equation}
where $\mu^*_\text{no}$ corresponds to case~V, $\mu^*_\text{queue}$ corresponds to case~IV and so forth.

The lower and upper bounds, $\mu^*_\text{no}$ and $\mu^*_\text{full}$, are easily obtained as we describe now. However, the other cases are more complicated and are analysed in the sections that follow.
For the lower  and upper bounds we have
\begin{align*}
\mu^*_\text{no} &= \max \big\{ \E [I_{X_1(t)}^{(1)}(t)],~ \E [I_{X_2(t)}^{(2)}(t)] \big\}, \\
\mu^*_\text{full} &= \bar \gamma_1 \bar \gamma_2 \mu_0^{(1)}+\gamma_2 \mu_1^{(2)}+ \gamma_1 \bar \gamma_2 \mu_1^{(1)}.
\end{align*}
The lower bound, $\mu^*_\text{no}$ is trivially achieved with a control policy that always uses the server with the higher mean throughput. The upper bound, is achieved with a control policy that uses the best server at any given time. Under the ordering in~\eqref{eq:assumption}, the throughput in this case is calculated as follows: If both servers are in state 0, then since $\mu_0^{(2)}\leq \mu_0^{(1)}$, the controller selects Server~$1$. This situation occurs at a long term proportion, $\bar \gamma_1 \bar \gamma_2$,  hence we obtain the first term of $\mu^*_\text{full}$. The other terms of $\mu^*_\text{full}$ are obtained with a similar argument. Note that when $\gamma_1=\gamma_2=\gamma$ and $\mu_i^{(1)} = \mu_i^{(2)}=\mu_i$ for $i=0,1$, the expression is reduced to  $\mu^*_\text{full} = \overline{\gamma}^2 \mu_0 + (1- \overline{\gamma}^2) \mu_1$ and can be obtained by a Binomial argument.

As a benchmark numerical case, all the examples we present use
\begin{equation}
\label{eq:111}
\gamma=0.5\,,~\mu_0 = 0.2\,, ~\mu_1 = 0.8\,,
\end{equation}
for both servers. Under these parameters
\[
\mu^*_\text{no} = 0.5\,, \qquad \text{and} \qquad \mu^*_\text{full} = 0.65\,.
\]
Hence in the examples that follow, we explore how $\mu^*_\text{queue}$, $\mu^*_\text{output}$ and $\mu^*_\text{state}$ vary within the interval $[0.5,0.65]$ as $\rho_j$, $j=1,2$ varies.

\section{Belief States}
\label{sec:belief}

In implementing a controller for each of the observation schemes, the use of {\em belief states} reduces both the complexity of the controller and the related analysis. The idea is to summarize the history of observations, $Y(t-1), Y(t-2),\ldots\,$,  into {\em sufficient statistics} that are updated by the controller. For our model, a natural choice for the belief state of server $j$ is
\[
\omega_j(t) = \mb \big( X_j(t) = 1 \, | \, \mbox{Prior knowledge to time } t \big).
\]
As we describe now, it is a simple matter to recursively update this sequence in a Bayesian manner. Denoting $\omega_j(t)$ by $\omega$, the believed chance of  success  is 
$$
r(\omega) :=  \bar\omega \mu_0 + \omega \mu_1.
$$
The updating algorithms (different for each observation scheme) make use of the following:
\begin{equation}\label{eq:belief}
\begin{array}{cc}
\tau_n(\omega):= \omega \rho + \gamma \bar \rho\,,\qquad
\tau_f(\omega):=\frac{\bar{q}\,\bar\mu_1\,\omega+p\, \bar\mu_0\,\bar\omega}{\bar r(\omega)}\,,\\
\\
\qquad\tau_s(\omega):=\frac{\bar{q}\,\mu_1\,\omega+p\,\mu_0\,\bar\omega}{r(\omega)}, \quad 
\tau_c(\omega):=\lambda \tau_s(\omega) + \bar \lambda \tau_f(\omega).
\end{array}
\end{equation}
Note that in the above, superscripts $j$ are omitted for clarity. The probabilistic meaning of these functions is described in the sequel. These are used to define recursions for updating the belief state. Each observation scheme entails a different type of recursion:

\vspace{10pt}
\noindent
\textbf{(II) State observation:} 
\begin{align*}
\big(\omega_1(t+1),\omega_2(t+1) \big) &=
\begin{cases}
\big( X_1(t), \tau_n^{(2)}\big(\omega_2(t)\big), & U(t) = 1, \\
\big(\tau_n^{(1)}\big(\omega_1(t)\big),X_2(t)\big) ,  & U(t) = 2. \\
\end{cases} 
\end{align*}

\noindent
 \textbf{(III) Output  observation:} 
$$
 \big(\omega_1(t+1),\omega_2(t+1) \big) = \left \{
  \begin{aligned}
    &\big(\tau_f^{(1)}(\omega_1(t)),\tau_n^{(2)}\big(\omega_2(t)\big)\big) , && \quad I^{(1)}_{X_1(t)}(t)=0, \\
    &  && \quad && \quad U(t) = 1,\\
    &\big(\tau_s^{(1)}(\omega_1(t)),\tau_n^{(2)}\big(\omega_2(t)\big)\big), && \quad I^{(1)}_{X_1(t)}(t)=1, \\
\\
     &\big( \tau_n^{(1)}\big(\omega_1(t)\big),\tau_f^{(2)}(\omega_2(t))\big), && \quad I^{(2)}_{X_2(t)}(t)=0 ,\\
    &  && \quad && \quad U(t) = 2.\\
    &\big( \tau_n^{(1)}\big(\omega_1(t)\big),\tau_s^{(2)}(\omega_2(t))\big), && \quad I^{(2)}_{X_2(t)}(t)=1 ,\\
  \end{aligned} \right.
$$ 
\noindent
\textbf{(IV) Queue observation:} 
$$
  \big(\omega_1(t+1),\omega_2(t+1) \big) = \left \{
  \begin{aligned}
    &\big(\tau_f^{(1)}(\omega_1(t)), \tau_n^{(2)}\big(\omega_2(t)\big)\big), && \quad \Delta Q(t)=1, \\
    &\big(\tau_c^{(1)}(\omega_1(t)),\tau_n^{(2)}\big(\omega_2(t)\big)\big), && \quad \Delta Q(t)=0, && \quad U(t)=1,\\
    &\big(\tau_s^{(1)}(\omega_1(t)),\tau_n^{(2)}\big(\omega_2(t)\big)\big), && \quad \Delta Q(t)=-1, \\
\\
     &\big(\tau_n^{(1)}\big(\omega_1(t)\big),\tau_f^{(2)}(\omega_2(t))\big), && \quad \Delta Q(t)=1,  \\
    & \big(\tau_n^{(1)}\big(\omega_1(t)\big), \tau_c^{(2)}(\omega_2(t))\big), && \quad \Delta Q(t)=0, && \quad U(t)=2.\\
    &\big(\tau_n^{(1)}\big(\omega_1(t)\big),\tau_s^{(2)}(\omega_2(t))\big), && \quad \Delta Q(t)=-1, \\
  \end{aligned} \right.
$$ 
Upon applying the recursions above, we indeed track the belief state as needed:
\begin{prop}
For each of the observation schemes, assume that at $t=0$, $\omega_j(0) = \Pl\big(X_j(0) = 1\big)$. Then upon implementing the recursion above, based on the observations, it holds that
\[
\omega_j(t) = \mb \big( X_j(t) = 1 \, | Y(t), Y(t-1),\ldots,Y(0) \big),\qquad t=1,2,\ldots\,.
\]
\end{prop}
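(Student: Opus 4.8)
The plan is to prove the claim by induction on $t$, showing that the stated recursions carry the time-$t$ posterior to the time-$(t+1)$ posterior. The base case is immediate from the initialization $\omega_j(0) = \mathbb{P}(X_j(0) = 1)$. Two structural facts underpin the whole argument, and I would isolate them first. First, the two environment chains $\{X_1(t)\}$ and $\{X_2(t)\}$ are mutually independent and, by the autonomy assumption in step~(2), evolve independently of the arrivals, the queue length and the control choices; consequently the joint posterior over $(X_1(t), X_2(t))$ factorizes, and it suffices to update the two scalar beliefs $\omega_1, \omega_2$ separately. Second, the action $U(t)$ is a deterministic function of the past observations $Y(t-1), Y(t-2), \ldots$, so conditioning on $U(t)$ adds no information beyond the observation history; this justifies treating ``which server is observed'' as known without biasing the update.

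With these in hand, the inductive step reduces to verifying two elementary operations. For a server that is \emph{not} observed at the current step no new information arrives, so its belief is simply propagated one step through the chain: if $\omega = \mathbb{P}(X_j(t) = 1 \mid \cdot)$ then, using autonomy (equivalently, the conditional independence of $X_j(t+1)$ from the observation history given $X_j(t)$), the predicted probability is $\bar q\,\omega + p\,\bar\omega$, which I would check equals $\rho\,\omega + \gamma\,\bar\rho = \tau_n(\omega)$ via the parametrization $p = \gamma\bar\rho$, $\rho = 1 - p - q$. For a server that \emph{is} observed I would combine a Bayes update on the new observation with the one-step propagation: in the state-observation scheme the current state is revealed exactly, so the belief collapses onto the observed value; in the output-observation scheme I would write the joint probability of $\{X_j(t+1) = 1\}$ together with a service success (respectively failure) and divide by its marginal $r(\omega)$ (respectively $\bar r(\omega)$), recovering $\tau_s$ and $\tau_f$ directly from Bayes' rule with Bernoulli likelihoods $\mu_0, \mu_1$.

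The delicate case, and the one I expect to be the main obstacle, is the queue-observation scheme, where $\Delta Q(t) = E(t) - I(t)$ and the arrival $E(t)$ is never observed. When $\Delta Q(t) = 1$ or $\Delta Q(t) = -1$ the service outcome is forced ($I(t) = 0$ or $I(t) = 1$) and, because $E(t)$ is independent of the environment, the implied value of $E(t)$ carries no information about $X_j$; these branches therefore collapse to $\tau_f$ and $\tau_s$ exactly as in the output scheme. The genuinely new case is $\Delta Q(t) = 0$, the ambiguous union $\{E(t) = 0, I(t) = 0\} \cup \{E(t) = 1, I(t) = 1\}$. Here I would apply the law of total probability, marginalizing over the two unobserved arrival/service scenarios consistent with $\Delta Q(t)=0$, and combine the resulting success and failure updates into the mixture $\tau_c$. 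Getting this combination right --- correctly aggregating the $\tau_s$ and $\tau_f$ contributions when the event $\{\Delta Q(t) = 0\}$ does not reveal $I(t)$ --- is the crux of the proof; the remaining verifications are routine algebraic manipulations of the Bernoulli likelihoods and the transition probabilities.
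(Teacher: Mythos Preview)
Your approach---induction on $t$ together with elementary Bayes updates, split into the unobserved-server propagation $\tau_n$ and the observed-server posteriors $\tau_s,\tau_f$---is exactly the strategy the paper uses. Your explicit remarks on the factorization of the joint belief and on the measurability of $U(t)$ with respect to the observation history are useful clarifications that the paper leaves implicit. For the state- and output-observation schemes the verifications will go through precisely as you sketch, matching the paper's sample computation of $\tau_f$.

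There is, however, a genuine snag in the $\Delta Q(t)=0$ branch of the queue-observation scheme that your plan does not resolve---and, to be candid, neither does the paper, which simply asserts the claimed identity. A careful application of Bayes' rule gives
\[
\mathbb{P}\big(X_j(t{+}1)=1 \,\big|\, \Delta Q(t)=0\big)
\;=\;
\frac{\bar\lambda\,\bar r(\omega)\,\tau_f(\omega) + \lambda\, r(\omega)\,\tau_s(\omega)}{\bar\lambda\,\bar r(\omega) + \lambda\, r(\omega)},
\]
because the two compatible scenarios $(E{=}0,I{=}0)$ and $(E{=}1,I{=}1)$ must be weighted by their \emph{posterior} probabilities given $\Delta Q=0$, which are proportional to $\bar\lambda\,\bar r(\omega)$ and $\lambda\,r(\omega)$, not to $\bar\lambda$ and $\lambda$. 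This coincides with the paper's $\tau_c(\omega)=\lambda\,\tau_s(\omega)+\bar\lambda\,\tau_f(\omega)$ only when $r(\omega)=\tfrac12$. So if you execute your law-of-total-probability step correctly you will \emph{not} recover the stated $\tau_c$; the recursion as written does not in general produce the exact conditional probability in the queue-observation case, and your argument will expose this discrepancy rather than verify the formula.
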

\begin{proof}
The proof and derivation of the operators in \eqref{eq:belief}  follows from elementary conditional probabilities and induction. We illustrate this for the output observation case here. It holds that
\begin{align*}
&\Pl\Big({X(t)=1 \given I(t-1)=0}\Big)
=\frac{\mbb{X(t)=1,I(t-1)=0}}{\mbb{I(t-1)=0}} \\
&=
\frac{\mbb{X(t)=1,I=0\given X=1}\mbb{X=1} + \mbb{X(t)=1,I=0\given X=0}\mbb{X=0}}
{\mbb{I=0\given X=1}\mbb{X=1}
+\mbb{I=0\given X=0}\mbb{X=0}},
\end{align*}
where we denote $X=X(t-1)$ and $I=I(t-1)$. Since $X(t)$ and $I(t-1)$ are conditionally independent given $X(t-1)$, the above numerator can be written as:
\begin{align*}
&\mbb{X(t)=1\given X=1}\mbb{I=0\given X=1}\mbb{X=1}\\
&+\mbb{X(t)=1\given X=0}\mbb{I=0\given X=0}\mbb{X=0}=\bar{q}\,\bar\mu_1\,\omega+p\, \bar\mu_0\,\bar\omega.
\end{align*}
Similarly, for the denominator we have:
$$\mbb{I=0\given X=1}\mbb{X=1}
+\mbb{I=0\given X=0}\mbb{X=0}= \bar \mu_1   \omega+ \bar\mu_0 \bar \omega,
$$
which is equal to $\bar r (\omega)$. Hence as expected, we find that
\[
 \Pl\Big({X(t)=1 \given I(t-1)=0}\Big)= \tau_f(\omega).
\]
 The derivation of $\tau_n(\omega)$ and $\tau_s(\omega)$ follows similar lines. For the queue observation case, notice that
\begin{align*}
\Pl\Big({X(t)=1\given \Delta Q(t)=1}\Big) &= \tau_f(\omega)\,,\qquad
\Pl\Big({X(t)=1\given \Delta Q(t)=-1}\Big) = \tau_s(\omega)\,,\\
\Pl\Big({X(t)=1\given \Delta Q(t)=0}\Big)&=\lambda \tau_s(\omega) + \bar \lambda \tau_f(\omega)=\tau_c(\omega).
\end{align*}
The state observation case follows similar lines.
\end{proof}

Note that the fixed point of $\tau_n$ is the stationary probability $\gamma$.
The fixed points of $\tau_f$ and $\tau_s$ are also of interest.
When $\rho\neq 0$ and $\mu_0\neq \mu_1$,
$\tau_f$ and $\tau_s$ are (real) hyperbolic M\"obius transformations of the form $(a\omega+b)/(c\omega+d)$ for $\omega\in[0,1]$.
As such, they each have two distinct fixed points, one stable and one unstable.
Here, excluding trivialities where $p, q \in\{0,1\}$,
the stable fixed point of each lies in $(0,1)$ and is of the form
$\big(a-d+\sqrt{(a-d)^2+4 bc}\big)/2c$
(see also Lemma 2 and  3 of \cite{Macphee1995}).
For $\tau_f$, we have $a=\bar{q}\,\bar\mu_1-p\,\bar\mu_0$, $b=p\,\bar\mu_0$, $c=\bar\mu_1-\bar\mu_0$, and $d=\bar\mu_0$. Fixed point of $\tau_s$ comes from the same formula by replacing $\bar\mu_i$ by~$\mu_i$.

Denote by $\omega_i^{(j)}$ the stable fixed point of $\tau_i$ for  $i=0,1$ and $j= 1,2$. Then 
$$
\Omega=\Omega_1 \times \Omega_2 \subset [0,1]\times [0,1],
$$
where we put
$$
\Omega_j =[\min(\omega_0^{(j)},\omega_1^{(j)}),\,\max(\omega_0^{(j)},\omega_1^{(j)})],
$$
is the \textit{belief state space} and  the limit of any infinite subsequence of the mappings $\tau_n$, $\tau_f$, $\tau_s$  and $\tau_c\,$(for $\omega_1, \omega_2 \in [0,1]$) lies within $\Omega$, see \cite{nazarathy2015challenge} for more details.

\section{Maximal Throughput}
\label{sec:pomdp}

Having defined sufficient statistics for the belief state and their evolution, the problem of finding a maximally stabilizing control can be posed as a Partially Observable Markov Decision Process (POMDP), see for example \cite {bauerle2011markov} or the historical reference \cite{SS1973}. The objective for the POMDP is
\[
\mu^* = \sup_\pi \liminf_{T \to \infty} \frac{1}{T} \E_\pi \Big[\sum_{t=0}^{T-1} I(t)\Big],
\]
where $U(t) = \pi\big(\omega_1(t),\omega_2(t)\big)$ influences the $I(t)$ as outlined in Section~2. A formal treatment of the POMDP, relating it to the maximal stability region of the system can be carried out. We now first introduce the myopic policy for the POMDP and then move onto optimality equations.

\vspace{10pt}
\noindent
{\bf The Myopic Policy}

One specific policy is the {\em myopic policy} given by: 
\begin{equation}
\label{eq:99}
\pi(\omega_1, \omega_2) =
\begin{cases}
\mbox{Server 2} &
\mbox{if}\qquad
\omega_2 \geq \frac{\mu_1^{(1)}-\mu_0^{(1)}}{\mu_1^{(2)}-\mu_0^{(2)}}\,\omega_1+\frac{\mu_0^{(1)}-\mu_0^{(2)}}{\mu_1^{(2)}-\mu_0^{(2)}},\\
\mbox{Server 1} & 
\mbox{if}
\qquad
\mbox{otherwise}.
\end{cases}
\end{equation}
The affine threshold in this policy is obtained by comparing the immediate expected mean throughput for any given pair $(\omega_1,\omega_2)$ and choosing the server that maximizes it. Such a policy is attractive in that it is easy to implement. Further, when the servers are symmetric (all parameters are identical), it holds from symmetry that it is optimal. In this case it can be represented as
\[
\pi(\omega_1,\omega_2) = \mbox{argmax}_{i=1,2} ~\omega_i,
\]
and we refer to it as the {\em symmetric myopic policy}.

\vspace{10pt}
\noindent
{\bf Simulation Result}

Figure~\ref{fig:convergence} demonstrates results obtained through a Monte Carlo simulation\footnote{Simulation details: We run the process for  $t=5,000,000$ time units, using common random numbers for each run and recording average throughput.} of the model for observation schemes (I)--(V). We use the parameters in \eqref{eq:111} and vary $\rho$ with $\rho_1=\rho_2 = \rho$ in the range $[0,1]$ with steps of $0.01$. The policy used is the symmetric myopic policy and is optimal since the servers are identical.

\begin{figure}[h]
\setlength\figureheight{2.0cm} \setlength\figurewidth{3.0cm}

\includegraphics{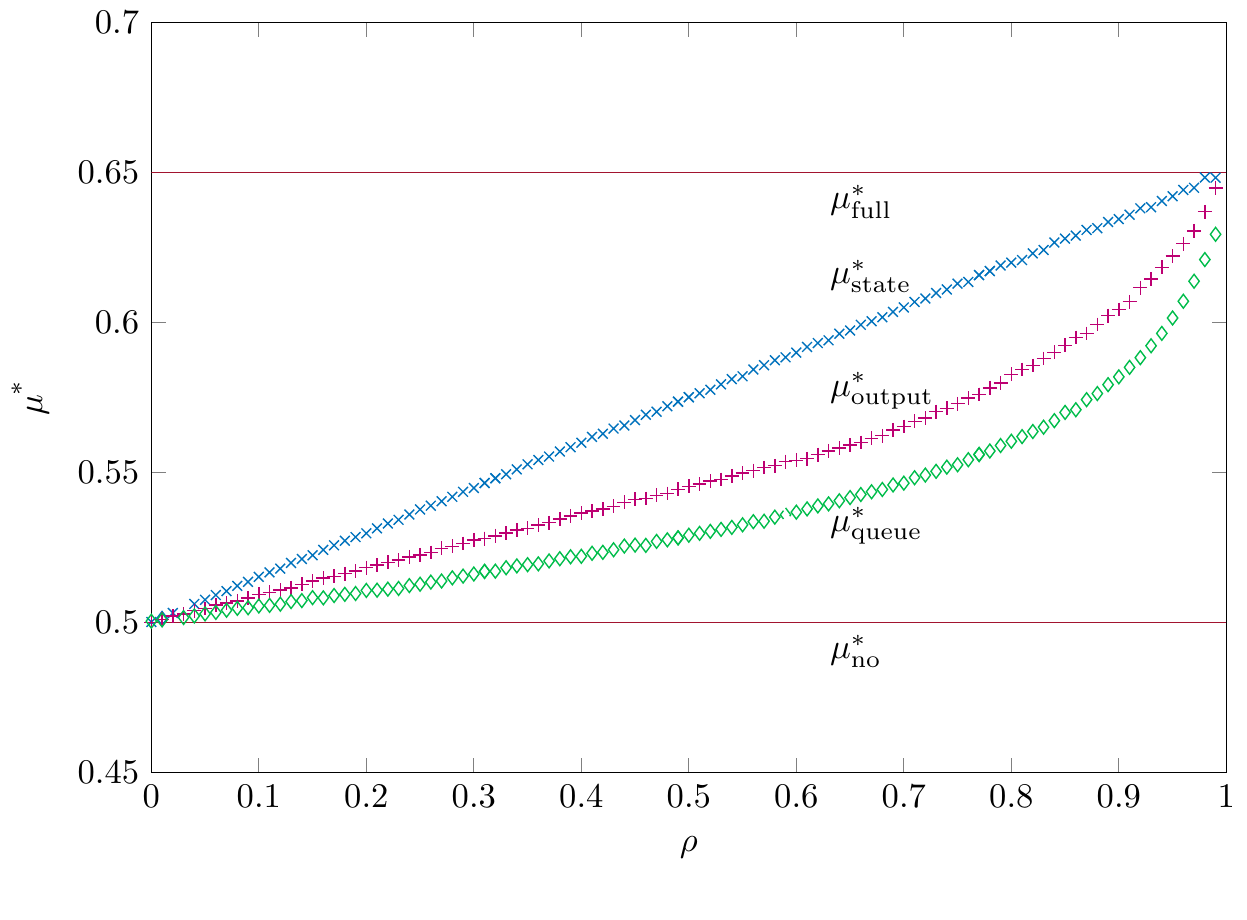}
\begin{center}
\vspace{-2mm}
\caption{The stability bound is displayed as a function of $\rho$ for the various observation schemes. This plot is based on simulation results using the parameters in \eqref{eq:111} and $\rho_1=\rho_2 = \rho$.
\label{fig:convergence}
} 
\end{center}
\end{figure}
As we see from the figure, the ordering \eqref{eq:relation} appears to hold. Further, at the i.i.d. case $\rho=0$, historical observations are not useful and the throughput of all observations schemes, except for full observation, is at $0.5$. At the other extreme, when $\rho \to 1$, we have that the state observation scheme converges to a throughput identical to that of the full observation scheme. This is because at that regime, the server state environments rarely change. Thus, from a throughput perspective, the controller behaves as though it has full information. On the other hand, even at $\rho=1$, the output observation scheme and queue observation scheme still performs at a lower throughput. Finally, for $\rho \in (0,1)$ it is evident that there is a gap in performance for each observation scheme. This gap quantifies the value of information in controlling our model and motivates further analysis.

\vspace{10pt}
\noindent
{\bf Bellman Equations}

It is well-known that the optimal policy that maximizes the throughput follows from the average reward Bellman equations. See \cite{puterman2014markov} for background on Markov Decision Processes (MDP) or \cite{Hernandez2012} for a discussion of average reward optimality with such state spaces. The Bellman equation is then
$$
\mu^*+h(\omega_1,\omega_2)=\max\big\{h^{(1)}(\omega_1,\omega_2),\,h^{(2)}(\omega_1,\omega_2)\big\}\,,
$$
where $h$ is the relative value function and the individual components $h^{(j)}(\cdot,\cdot)$, vary as follows:

\noindent
\textbf{(II) State observation:} 
\begin{align*}
h^{(1)}(\omega_1,\omega_2)&:=r^{(1)}(\omega_1)+\Big[\bar \omega_1\,h\big(p_1, \tau_n^{(2)}(\omega_2)\big)+\omega_1\,h\big(\bar{q}_1, \tau_n^{(2)}(\omega_2)\big)\Big]\,,\\
h^{(2)}(\omega_1,\omega_2)&:=r^{(2)}(\omega_2)+\Big[\bar \omega_2\,h\big(\tau_n^{(1)}(\omega_1), p_2 \big)+\omega_2\,h\big(\tau_n^{(1)}(\omega_1), \bar{q}_2 \big)\Big]\,.
\end{align*}
\noindent
 \textbf{(III) Output  observation:} 
\begin{align*}
h^{(1)}(\omega_1,\omega_2)&:=r^{(1)}(\omega_1)+\Big[\bar r^{(1)}(\omega_1)\,h\big(\tau_f^{(1)}(\omega_1), \tau_n^{(2)}(\omega_2)\big)+r^{(1)}(\omega_1)\,h\big(\tau_s^{(1)}(\omega_1), \tau_n^{(2)}(\omega_2)\big)\Big]\,,\\
h^{(2)}(\omega_1,\omega_2)&:=r^{(2)}(\omega_2)+\Big[\bar r^{(2)}(\omega_2)\,h\big(\tau_n^{(1)}(\omega_1), \tau_f^{(2)}(\omega_2)\big)+r^{(2)}(\omega_2)\,h\big(\tau_n^{(1)}(\omega_1), \tau_s^{(2)}(\omega_2)\big)\Big]\,.
\end{align*}
\noindent
\textbf{(IV) Queue observation:} 
\begin{align*}
h^{(1)}(\omega_1,\omega_2):=r^{(1)}(\omega_1)&+\Big[ \lambda \bar r^{(1)}(\omega_1)\,h\big(\tau_f^{(1)}(\omega_1), \tau_n^{(2)}(\omega_2)\big)+\bar \lambda r^{(1)}(\omega_1)\,h\big(\tau_s^{(1)}(\omega_1), \tau_n^{(2)}(\omega_2)\big)\\
&+\Big(\bar \lambda \bar r^{(1)}(\omega_1)+ \lambda r^{(1)}(\omega_1)\Big)\Big( h(\tau_c^{(1)}(\omega_1), \tau_n^{(2)}(\omega_2))\Big)\Big]\,,
\end{align*}
\begin{align*}
h^{(2)}(\omega_1,\omega_2):=r^{(2)}(\omega_2)&+\Big[\lambda \bar r^{(2)}(\omega_2)\,h\big(\tau_n^{(1)}(\omega_1), \tau_f^{(2)}(\omega_2)\big)+\bar \lambda r^{(2)}(\omega_2)\,h\big(\tau_n^{(1)}(\omega_1), \tau_s^{(2)}(\omega_2)\big)\\
&+\Big(\bar \lambda \bar r^{(1)}(\omega_2)+ \lambda r^{(1)}(\omega_2)\Big)\Big( h(\tau_n^{(1)}(\omega_1), \tau_c^{(2)}(\omega_2))\Big)\Big]\,.
\end{align*}
The optimal decision is then to choose Server~$1$ if and only if $h^{(1)}(\omega_1, \omega_2) \ge h^{(2)}(\omega_1 , \omega_2)$, breaking ties arbitrarily.
Since $\tau_n^{(j)}(0)=\tau_s^{(j)}(0)=\tau_f^{(j)}(0)=p_j $ and $\tau_n^{(j)}(1)=\tau_s^{(j)}(1)=\tau_f^{(j)}(1)=\bar q_j $, for all three aforementioned cases we have:
\begin{equation}\label{eq:points1}
\begin{array}{cc}
\mu^*+h(0,0)=\max\big\{ \mu_0^{(1)}+ h(p_1,p_2), \mu_0^{(2)}+ h(p_1,p_2)\big\},\\
\\
\mu^*+h(1,1)= \max \big\{\mu_1^{(1)}+ h(\bar q_1,\bar q_2),\mu_1^{(2)}+ h(\bar q_1,\bar q_2)\big\}.
\end{array}
\end{equation}
From the ordering in~\eqref{eq:assumption}, the above equations imply that at point $(\omega_1, \omega_2)=(0,0)$, choosing  Server~$1$ is optimal and at point $(\omega_1, \omega_2)=(1,1)$ choosing Server~$ 2$ is optimal. This observation gives some initial insight into the structure of optimal policies. We now purse these further numerically.

\vspace{10pt}
\noindent
{\bf Numerical Investigation of Optimal Policies}

A solution to the above Bellman equations can be obtained numerically using relative value iteration and discretization of the belief state space, $\Omega$. Here we consider that each interval $[0,1]$ for $\omega_1$ and $\omega_2$ is partitioned to 1000 equal sub-intervals. We then run relative value iteration with an accepted error set to $\epsilon=0.0001$. 

Our various numerical experiments indicate the following:
\begin{enumerate}
\item The ordering in \eqref{eq:relation} holds.
\item Increasing (positive) $\rho_j$ always yields an increase in $\mu^*$.
\item Though the myopic policy does not appear to be generally optimal, when both servers are identical, the optimal policy is the symmetric myopic policy. 
\item In all cases, the optimal policy is given by a non-decreasing switching curve within $\Omega$. That is, there exists a function $\omega_2^*(\omega_1)$ where the optimal policy is
\[
\pi(\omega_1, \omega_2) =
\begin{cases}
\mbox{Server 2} &
\mbox{if}\qquad
\omega_2 \ge \omega_2^*\big(\omega_1\big),\\
\mbox{Server 1} & 
\mbox{if}
\qquad
\mbox{otherwise}.
\end{cases}
\]
\item When the ordering in~\eqref{eq:assumption} has strict inequalities, $\omega_2^*(0) > 0$ and $\omega^*_2(1) < 1$.
\item For identical servers, it holds that the switching curve for the output observation case is sandwiched between the switching curve of the state observation case and the myopic switching line \eqref{eq:99}.
\item The switching curve for the queue observation case depends on $\lambda$. Further, when $\lambda$ is at either of the extreme points ($\lambda = 0$ or $\lambda =1$), the queue observation case agrees with the output observation case.
\end{enumerate}

As one illustration of some of the above properties, consider Table~\ref{table1} based on the parameters of \eqref{eq:111} and various values of $\rho_1$ and $\rho_2$. The results in the table further affirm comments $1$ and $3$ above and contains values that agree with Monte Carlo simulation results, similar to those of Figure~\ref{fig:convergence}.

\begin{table}[h!]
 \begin{center}
     \begin{tabular}{ |>{\centering\arraybackslash}  m{1cm} | >{\centering\arraybackslash}  m{1cm} |>{\centering\arraybackslash}  m{2.8cm} |
     >{\centering\arraybackslash}  m{2.8cm} | >{\centering\arraybackslash}  m{2.8cm} | }
    \hline
   \textbf{ $\rho_1$}& \textbf{$\rho_2$}& \textbf{ $\mu^*_\text{state}$} & \textbf{ $\mu^*_\text{output}~$} &  \textbf{$\mu^*_\text{queue}$}\\
    \Xhline{1.5pt}\vspace{1pt}
    
    0.2 & 0.5 &0.5543& 0.5314 &   0.5190\\
     \hline
    0.4& 0.5 &0.5673& 0.5400 &   0.5231 \\
   \hline 
  0.6 & 0.5 & 0.5823& 0.5489 &   0.5289\\
    \hline 
   0.8 & 0.5 & 0.6009  & 0.5647 & 0.5360 \\

              \hline
    \end{tabular}
    \caption{Stability region bounds for observations schemes (II)-(IV) for various $\rho_1 $ and $\rho_2$ values. Note the queue observation case is with $\lambda = 0.5$.}
  \label{table1}  
 \end{center}
\end{table}

As a further illustration, Figure~\ref{fig:Bellman} shows switching curves, $w_2^*(\cdot)$ for the parameters of \eqref{eq:111} with $\rho_1=0.5 $ and $\rho_2=0.7$. In the figure, the red dotted line is the myopic policy line (suboptimal).  The blue solid curve is the switching curve for the output observation case. The green loosely dashed line is related to the queue observation case and the orange densely dashed curve is the switching curve for the state observation case. These curves were obtained by finding the optimal decision for every (discretized) element of $\Omega$ and then observing the switching curve structure.
\begin{figure}[h]
\centering \setlength\figureheight{2.0cm} \setlength\figurewidth{3.0cm}
\includegraphics{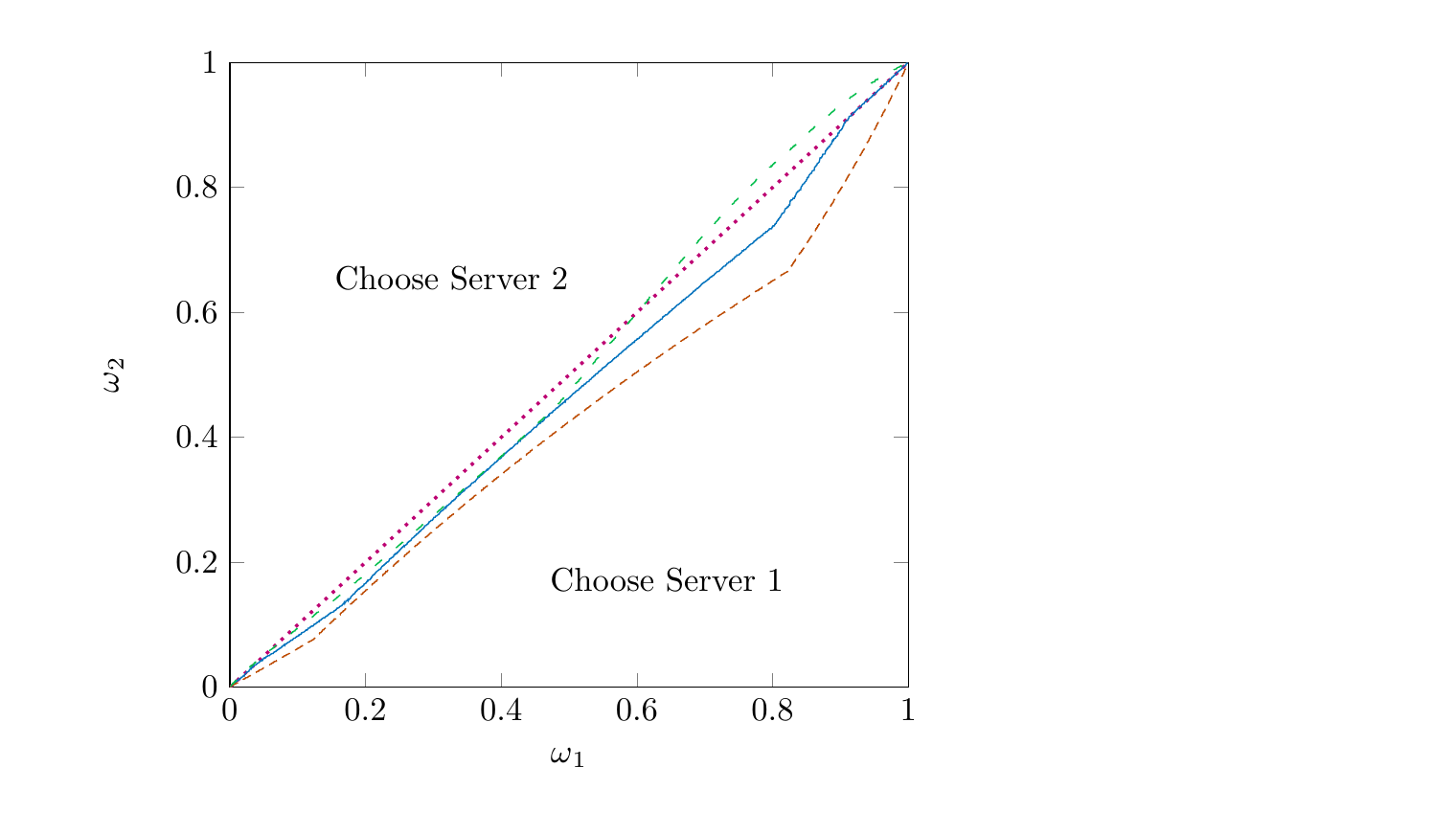}
\begin{center}
\begin{tikzpicture}
\draw[dotted,line width=1.0pt,color=plotcolor2] (0,0)--(0.5,0);
\node at (1.7,0) {\footnotesize Myopic threshold};
\draw[loosely dashed,color=plotcolor3] (3.1,0)--(3.6,0);
\node at (4.8,0) {\footnotesize Opt. threshold   IV};
\draw[solid,color=plotcolor1] (6.3,0)--(6.75,0);
\node at (8,0) {\footnotesize Opt. threshold  III };
\draw[densely dashed,color=plotcolor4] (9.5,0)--(10,0);
\node at (11.1,0) {\footnotesize Opt. threshold  II};
\end{tikzpicture}
\vspace{-2mm}
\caption{Myopic and optimal policies for the state observation Case~II, output observation Case~III and queue observation Case~IV ($\lambda = 0.5$). This is for a system with $\rho_1=0.5$ and $\rho_2=0.7$. 
\label{fig:Bellman}
}\end{center}
\end{figure}
\section{QBD Structured Models for Finite State Controllers}
\label{sec:finiteQBD}
Here we illustrate how Matrix-Analytic Modelling (MAM) can be used to analyse a finite state controller that approximates an optimal controller. Our analysis is for the output observation scheme (Case~IV). Similar analysis can be applied to the other observation schemes as well.

A finite state controller operates by using a finite discrete belief state $\tilde{\Omega}$, representing a discrete grid in $\Omega$. With such a controller, we consider the whole system as a  Quasi-Birth-and-Death (QBD) process (for more details about QBD process see for example \cite{latouche1999introduction}). Using the QBD structure, we find a matrix analytic expression for $\mu^*_{\mbox{output}}$ (denoted by $\mu^*$ in this section).

Take $\tilde{\Omega} = \{1,\ldots,M\}^2$ and define the controller state at time $t$ by $(\psi_1(t),\psi_2(t)) \in \tilde{\Omega}$. In doing so, we treat $\psi_j(t)$ as $\lceil M \omega_j(t) \rceil$. The controller action is (potentially) randomized based on a matrix of probabilities $C$ so that Server~$ 2$ is chosen
with probability $C_{(\psi_1(t),\psi_2(t))}$ and otherwise the choice is Server~$ 1$. That is, the matrix $C$ is a randomized control policy. Such a policy encodes information as in Figure~\ref{fig:Bellman}.

The controller state is updated in a (potentially) randomized manner based on the $M \times M$ stochastic matrices $N^{(j)}, S^{(j)}, F^{(j)}$ for $j=1,2$ as follows:
if Server~$ 1$ was not selected (either because there were no jobs in the queue, or because the other server was selected),
the distribution of the new state is $\big(N^{(1)}_{\psi_1(t),1}, \ldots, N^{(1)}_{\psi_1(t),M} \big)$; that is taken from the row indexed by $\psi_1(t)$.
Similarly, if Server~$1$ was chosen and service was successful ($I=1$), the distribution of the new state is $\big(S^{(1)}_{\psi_1(t),1}, \ldots, S^{(1)}_{\psi_1(t),M} \big)$.
Finally, if Server~$1$  was chosen and the service failed ($I=0$), the distribution of the new state is  $\big(F^{(1)}_{\psi_1(t),1}, \ldots, F^{(1)}_{\psi_1(t),M} \big)$. Similarly, for  Server~$2$, we have $\big(N^{(2)}_{1, \psi_2(t)}, \ldots, N^{(2)}_{M,  \psi_2(t)} \big)$, 
$\big(S^{(2)}_{1, \psi_2(t)}, \ldots, S^{(2)}_{M,  \psi_2(t)} \big)$ and $\big(F^{(2)}_{1, \psi_2(t)}, \ldots, F^{(2)}_{M,  \psi_2(t)} \big)$,  respectively.
Therefore, the rows of matrices $N^{(j)}, S^{(j)}$ and $F^{(j)}$ for $j=1,2$ indicate how to (potentially randomly) choose the next controller state.  Here, S stands for Success, F for Failure and N for No service.

We construct the matrices $N^{(j)}, S^{(j)}, F^{(j)}$ based on a discretization of $\tau_n^{(j)}, \tau_s^{(j)}$ and $\tau_f^{(j)}$, respectively. For example, construction of $S$ from $\tau_s$ is as follows: construct the elements of $S$ by 
\[
\begin{cases}
S_{i,j}=1 & j\, \text{is an integer}   \\
S_{i,\lfloor j \rfloor}=1 &  1 \leq \lfloor j \rfloor \leq M  \\
S_{i,\lceil j \rceil}=1 & otherwise
\end{cases}
\qquad
\mbox{with}
\qquad
j=M \tau_s\big(\frac{i-1}{M}\big),
\]
and $S_{i,k} = 0$ for all other elements with $i, k =1, \ldots, M$. After this, we ensure irreducibility of this matrix  by fixing $\epsilon>0$ (e.g. = $0.001$ as in our numerical examples) and adding $\epsilon/M$ to each elements of the matrix and then renormalizing it. 

The matrices $F$ and $N$ are constructed in a similar way based on $\tau_f$ and $\tau_n$, respectively. 
This is simply a mechanism to encode the transition operators over the finite grid. Hence the matrices $N^{(j)}, S^{(j)}, F^{(j)}$ describe propagation of $\psi_j$ through the belief operators, similarly to the propagation of $\omega$ through their continuous counterparts. 

Now, given such a controller with
\[
\mbox{Controller parameters} = \Big(N^{(1)}, S^{(1)}, F^{(1)}, N^{(2)}, S^{(2)},  F^{(2)}, C\Big),
\]
we construct a Markov chain, $Z(t)$ for the system.
The state of this model at time $t$ is given by the queue length, server environment state, and controller state as follows:
{\small
\[
Z(t) = \Big(\underbrace{Q(t)}_\text{Level}, \underbrace{\big( \overbrace{\big (X_{1}(t),X_{2}(t)\big)}^\text{Servers},\overbrace{(\psi_1(t),\psi_2(t))}^\text{Controller}\!\!\!\big)}_\text{Phase}\Big)
\in
\{0,1,\ldots\} \times \{1,2\}^2\times \{1,\ldots,M\}^2 .
\]
}

\vspace{10pt}
\noindent
{\bf Explicit QBD Construction}

When the states of $Z(t)$ are lexicographically ordered, with first component countably infinite (levels)
and the other components finite (phases), the resulting (infinite) probability transition matrix is of the QBD form:
\begin{equation}
\label{eq:qbd1}
A = \left[
\begin{array}{ccccccc}
\tilde A_0 &  \tilde A_1 &  & \ &  & 0 \\
{A}_{-1} & {A}_0 & {A}_1 &  & &  \\
& {A}_{-1} & {A}_0 & {A}_1 &  &   \\
& & {A}_{-1} & {A}_0 & {A}_1 &    \\
0& &  & \ddots & \ddots & \ddots  &  \\
\end{array}
\right]\,,
\end{equation}
where each of $\tilde A_0$, $\tilde A_1$, $A_{-1}$, $A_0$, and $A_1$ is a block matrix of order $4M^2$ as we construct below. 

The matrix $A_{-1}$ represents the phase transition where there is a one level decrease. Similarly, the matrix $A_{1}$ represents phase transition where there is a one level increase and $A_0$ represents the phase transition where the level remains the same. The blocks are constructed as follows:
\begin{equation}\label{eq:mainmatrices}
\begin{array}{cc}
\tilde A_0 =\overline{\lambda} \tilde{N},\quad 
\tilde A_1=\lambda \tilde{N},\qquad \quad \\
\\
 A_{-1} = \overline{\lambda} \tilde{S},\quad
A_{0} = \overline{\lambda}\tilde{F}+\lambda\tilde{S},\quad
A_1 = \lambda \tilde{F},
\end{array}
\end{equation}
where the matrices, $\tilde{S}, \tilde{F}, \tilde{N}$ (each of order $4 M^2$) denote the change of phase together with a service success, service failure or no service attempt, respectively. For instance, the $(i, j)$-th entry of $\tilde{S}$ is the chance of a service success together with a change of phase from $i$ to $j$ (note that $i$ and $j$ are each 4-tuples). The sum $\tilde{S} + \tilde{F}$ is a stochastic matrix (as is evident from the construction below). Similarly, $\tilde{N}$ is a stochastic matrix. Hence the overall transition probability (infinite) matrix $A$ is stochastic as well.

To construct $\tilde{S}, \tilde{F}$ and $\tilde{N}$, we define $M^2 \times M^2$ matrices $\tilde{S}_{\_k\ell}$, $\tilde{F}_{\_k\ell}$ and $\tilde{N}_{\_k\ell}$ for $k,\ell =0,1$. Taking $\tilde{S}_{\_k\ell}$ as an example, its $(i,j)$-th entry (each represented as a 2-tuple), describes the chance of a success together with a transition of belief state from i to j, when the environment of the first server is in state $k$ and that of the second server is in  state $\ell$. Here $i$ and $j$, each represent the overall system belief state in lexicographic order. That is, we should refer to $i$ as $(i_1,i_2)$ and similarly to $j$. A similar interpretation holds for $\tilde{F}_{\_k\ell}$ and~$\tilde{N}_{\_k\ell}$.
These aforementioned matrices are constructed (for $k, \ell = 0,1$) as follows:
\begin{align*}
\tilde{S}_{\_k\ell}&= \mu_\ell^{(2)}\,\Big(diag\big(vec(C')\big)\Big)\,( N^{(1)} \otimes S^{(2)})+ \mu_k^{(1)}\,\Big(diag\big(vec(\bar C')\big)\Big) \,( S^{(1)} \otimes N^{(2)}),\\
\tilde{F}_{\_k\ell}&=\bar\mu_\ell^{(2)}\,\Big(diag\big(vec(C')\big)\Big)\,( N^{(1)} \otimes F^{(2)})+\bar\mu_k^{(1)}\,\Big(diag\big(vec(\bar C')\big)\Big) \,( F^{(1)} \otimes N^{(2)}), \\
\tilde{N}_{\_k\ell}&=  ( N^{(1)} \otimes N^{(2)}),
\end{align*}
where $diag(\cdot)$ is an operation taking a vector and resulting in a diagonal matrix with the vector in the diagonal, $vec(\cdot)$ is an operation taking a matrix and resulting in a vector with the columns of the matrix stacked up one by one, and $\otimes$ is the standard Kronecker product.
 
To see the above,  let us consider (for e.g.) an element of the matrix $\tilde{S}_{\_k\ell}$ at coordinate $i=(i_1,i_2)$ and $j=(j_1,j_2)$. This describes the probability of the event
\[
W = \{\mbox{Success of service together with a transition to  belief state}~(j_1,j_2)\},
\]
where $X_1 = k$, $X_2=\ell$, $\psi_1 = i_1$, and $\psi_2 = i_2$. 
The event $W$ can be partitioned into $W_1$ (service attempt was on $1$) and $W_2$ (service attempt was on $2$). The chance of $W_2$ is $C_{i_1,i_2}$. With choosing Server~$2$ the success probability is $\mu_\ell^{(2)}$. Then under the event $W_2$, the belief state of Server~$1$ will be updated according to $N^{(1)}$ and the belief state of Server~$2$ with $S^{(2)}$. The $M^2 \times M^2$ matrix $diag(vec(C'))$ is a diagonal matrix where its diagonal elements are the rows of the matrix $C$, each represent the chance of $U=2$.

With the matrices  $\tilde{S}_{\_k\ell}$, $\tilde{F}_{\_k\ell}$ and $\tilde{N}_{\_k\ell}$ (for $k,\ell =0,1$) in hand, we construct the matrices $\tilde{S}$, $\tilde{F }$ and $\tilde{N}$ as:

$$
 \tilde{S}=(P^{(1)}\otimes P^{(2)})\circledast
 \left[ \begin{array}{cccc}
  \tilde{S}_{\_00} & 0 & 0 & 0\\
  0& \tilde{S}_{\_01}& 0 &0\\
 0 & 0& \tilde{S}_{\_10}&0\\
 0&0&0&\tilde{S}_{\_11}
 \end{array} \right],
 $$
 $$
 \tilde{F}=(P^{(1)}\otimes P^{(2)})\circledast
 \left[ \begin{array}{cccc}
 \tilde{F}_{\_00} &0&0&0\\
 0& \tilde{F}_{\_01}&0&0\\
 0&0&\tilde{F}_{\_10}&0\\
 0&0&0& \tilde{F}_{\_11}
  \end{array} \right],
 $$
 $$
  \tilde{N}=(P^{(1)}\otimes P^{(2)})\circledast
  \left[ \begin{array}{cccc}
  \tilde{N}_{\_00} &0&0&0\\
  0& \tilde{N}_{\_01}&0&0\\
  0&0&\tilde{N}_{\_10}&0\\
  0&0&0& \tilde{N}_{\_11}
  \end{array} \right],
$$
where $P^{(j)}$ for $j =1,2$ are the $2\times 2$ probability transition matrices of the servers given by~\eqref{eq:prob.matrix} and operation $\circledast$ is defined as

$$
  \left[ \begin{array}{cc}
  a_{11} & a_{12}\\
 a_{21}&a_{22}
 \end{array}\right]\circledast
 \left[ \begin{array}{cc}
  A & 0\\
 0&B
 \end{array} \right]
 =
 \left[ \begin{array}{cc}
  a_{11}A & a_{12}A\\
 a_{21}B & a_{22}B
 \end{array}\right].
 $$

Putting all of the above components together yields the probability transition matrix of $Z(t)$, $A$.

\vspace{10pt}
\noindent
{\bf Stability Criterion}

A well-known sufficient condition for positive recurrence (stability) of QBDs such as $Z(t)$  is
\[
{\pi}_\infty\big( A_{1} - A_{-1} \big) \mathbf{1} < 0,
\]
where ${\pi}_\infty$ is the stationary distribution of the (finite) stochastic matrix $A_{-1} + A_0 + A_1$ and $\mathbf{1}$ is a column vector of ones. From~\eqref{eq:mainmatrices}, we see that this is also the stationary distribution of $\tilde{S} + \tilde{F}$ which does not depend on $\lambda$.
This property of our QBD allows us to represent the stability criterion as
\begin{equation}
\label{eq:stabQBD}
\lambda < \mu^* =  {\pi}_\infty \tilde{S}{\mathbf 1},
\end{equation}
with $\mu^*$ depending on the controller and system parameters but not depending on $\lambda$. 

In addition to the stability criteria, a further virtue of modelling the system as a QBD is that we can use the vast body of MAM knowledge and algorithms for analysing the system and ultimately optimizing controllers. Nevertheless, our focus in this paper is on stability.

\vspace{10pt}
\noindent
{\bf Numerical Illustration}

We now use our QBD model and the stability criterion \eqref{eq:stabQBD} to explore the performance of finite state controllers.
\begin{figure}[h]
\centering \setlength\figureheight{3.0cm} \setlength\figurewidth{6.0cm}
\input{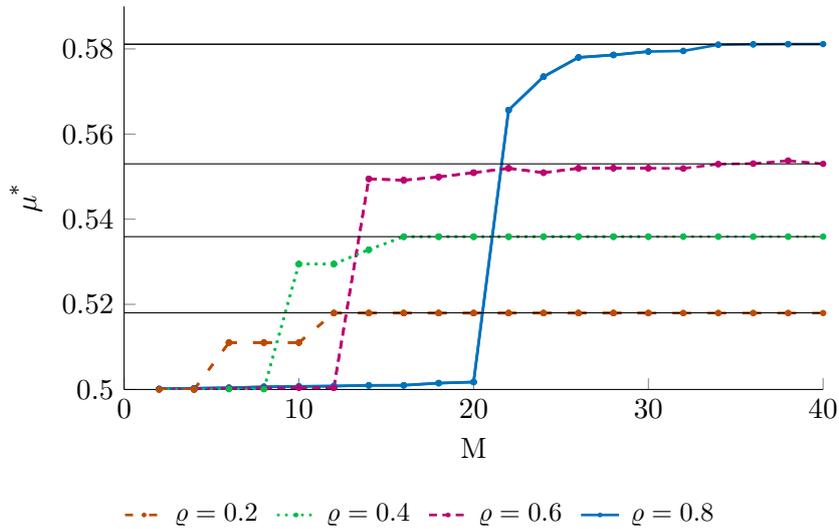}
\begin{center}
\begin{tikzpicture}
\draw[plotstyle4] (0,0)--(0.5,0);
\draw[plotcolor4,fill=plotcolor4,discretemarkers] (0.25,0) circle (\markersize);
\node at (1.2,0) {\small $\rho=0.2$};
\draw[plotstyle3] (2.0,0)--(2.5,0);
\draw[plotcolor3,fill=plotcolor3,discretemarkers] (2.25,0) circle (\markersize);
\node at (3.2,0) {\small $\rho=0.4$};
\draw[plotstyle2] (4.0,0)--(4.5,0);
\draw[plotcolor2,fill=plotcolor2,discretemarkers] (4.25,0) circle (\markersize);
\node at (5.2,0) {\small $\rho=0.6$};
\draw[plotstyle1] (6.0,0)--(6.5,0);
\draw[plotcolor1,fill=plotcolor1,discretemarkers] (6.25,0) circle (\markersize);
\node at (7.2,0) {\small $\rho=0.8$};
\end{tikzpicture}
\end{center}
\vspace{-2mm}
\caption{Stability bound achieved by finite state controllers for observation scheme III with increasing $M$, computed by \eqref{eq:stabQBD}. The limiting horizontal lines are at $\mu^*$ computed by means of relative value iteration of Bellman equations.
\label{fig:MyopicSameBandits}
}
\end{figure}

 In doing so, we consider the parameters as in \eqref{eq:111} with $\rho_1=\rho_2=\rho$. Since in this situation, the servers are identical, the symmetric myopic policy is optimal and we thus restrict attention to a matrix $C$ with
\[
C_{i,j} = 
\begin{cases}
1, & i < j, \\
0.5, & i=j, \\
0, & i > j.
\end{cases}
\]

Using these parameters, we evaluated \eqref{eq:stabQBD} for increasing $M$ and for various values of $\rho$. The results are in Figure~\ref{fig:MyopicSameBandits}. As expected, the performance of the finite state controller converges to that found by numerical solution of the Bellman equations as in the previous section. The sudden increase in performance as $M$ increases (e.g. at $M=20$ for $\rho=0.8$) can be attributed to discretization phenomena. For reference, the values of $\mu^*$ obtained by Bellman equation (as well as the QBD when $M \to \infty$) are   
$0.5179,\, 0.5359,\, 0.5539$ and $0.5815$ for $\rho=0.2,\, 0.4,\, 0.6 $ and $0.8$, respectively. 
\section{Outlook}
\label{sec:outlook}

This paper described some results from a research effort attempting to handle control of stochastic systems with partial observations where the control decision influences the observation made. Explicit analysis of such systems is extremely challenging as is evident by both the complicated Bellman equations and the QBD structure that we put forward (even for a simple system as we consider). Nevertheless, insights obtained on the role of information, e.g. the effect of the observation scheme (I--V) on system stability are of interest.

Our model and numerical results, pave the way for explicit proofs of some of the structural properties outlined above. Moreover, the analysis remains to be extended to more general server environment models, as well as systems with more queues and control decisions. Related work is in \cite{nazarathy2015challenge}, an earlier paper that leads to this work. An aspect in \cite{nazarathy2015challenge} that remains to be further considered is the networked case where the authors investigated (through simulation) cases in which the relationship of stability and throughput is not as immediate as in our current paper. A further related (recent) paper, \cite{meshram2016whittle}, deals with a situation similar to our output observation case (III). In that paper, the authors consider the Whittle index applied to a similar system (without considering a queue and stability). Relating the Whittle index and system stability, is a further avenue that requires investigation.

\section*{Acknowledgement}
AA is supported by Australian Research Council (ARC) grant DE130100291 and would like to acknowledge support from the ARC Centre for Excellence for the Mathematical and Statistical Frontiers (ACEMS).


\end{document}